\documentclass[12pt]{amsart}

\usepackage{amsmath, amscd, graphicx, latexsym, hyperref, rlepsf, times, enumerate}

\textwidth 6in \textheight 7.6in \evensidemargin .25in

\oddsidemargin .25in \theoremstyle{plain}

\newtheorem{Thm}{Theorem}

\newtheorem{Lem}[Thm]{Lemma}

\newtheorem{Prop}[Thm]{Proposition}

\newtheorem{Rem}[Thm]{Remark}

\newcommand{\genus}{\operatorname{genus}}

\newcommand{\rot}{\operatorname{rot}}

\newcommand{\tb}{\mathop{\rm tb}\nolimits}

\newcommand{\OB}{\mathcal{OB}}

\newcommand{\tr}{\operatorname{tr}}
\newcommand{\SL}{\operatorname{SL}}

\def\v{\vskip.12in}

\def\d{\delta}
\def\g{\gamma}

\begin{document}

\title{Canonical contact structures on some singularity links}

\author{Mohan Bhupal and Burak Ozbagci}

%\subjclass{57R17, 53D10, 32S55, 32S25}

\address{Department of Mathematics,  METU, Ankara, Turkey, \newline bhupal@metu.edu.tr}

\address{Department of Mathematics, Ko\c{c} University, Istanbul, Turkey
\newline bozbagci@ku.edu.tr}
\subjclass[2000]{57R17, 57R65, 53D10, 32S55, 32S25}

%\date{\today}

\begin{abstract}

We identify the canonical contact structure on the link of a simple
elliptic or cusp singularity by drawing a Legendrian handlebody
diagram of one of its Stein fillings. We also show that the
canonical contact structure on the link of a numerically Gorenstein
surface singularity is trivial considered as a real plane
bundle.

\end{abstract}

\maketitle

\section{Introduction}

There is a canonical (a.k.a. Milnor fillable)  contact structure
$\xi_{can}$ on the link of an isolated complex surface singularity,
which is unique up to isomorphism \cite{cnp}. It is known that a
Milnor fillable contact structure is Stein fillable \cite{bd} and
universally tight \cite{lo} (note that a Stein fillable contact
structure is not necessarily universally tight \cite[page 670]{go}).
Moreover there are only finitely many isomorphism classes of Stein
fillable contact structures on a closed orientable $3$-manifold
\cite[Lemma 3.4]{ay}. In favorable circumstances, these facts are
sufficient to pin down the canonical contact structure on a given
singularity link.

In this article we determine the canonical contact structure on the
link of a simple elliptic or cusp singularity by drawing a
Legendrian handlebody diagram (cf. \cite{go,gs}) of one of its Stein
fillings. In addition, we convert this diagram into a surgery
diagram of the canonical contact structure by replacing any
$1$-handle in the Stein filling by a contact $(+1)$-surgery along a
Legendrian unknot in the standard contact $S^3$ (cf.
\cite{dg1,dg2}).

As a byproduct of our constructions, we answer positively, for the
classes of singularities above, the question posed by Caubel, Nemethi
and Popescu-Pampu of whether all Milnor fillable contact structures
on a Milnor fillable $3$-manifold are not just
isomorphic but \emph{isotopic} (cf. \cite[Remark 4.10]{cnp}).

The classes of simple elliptic and cusp singularities are known to
be numerically Gorenstein. Here we show that the canonical contact
structure on the link of such a surface singularity is trivial
considered as a real plane bundle.

The contact structures which appear in this paper are assumed to be
positive and co-orientable.  The reader is advised to turn to
\cite{nemet} for more on the canonical contact structures.

\section{Numerically Gorenstein
surface singularities}

The class of {\em numerically Gorenstein} surface singularities was
introduced by A. Durfee in \cite{dur} as the class of singularities
for which the holomorphic tangent bundle is smoothly trivial on
a punctured neighborhood of the singular point. As Durfee observed
(see Lemma~\ref{tfae}), a surface singularity is numerically
Gorenstein if and only if its canonical bundle is smoothly
trivial.

By contrast, a normal surface singularity is {\em Gorenstein} if and
only if its canonical bundle is holomorphically trivial. This shows
that a Gorenstein normal surface singularity is numerically
Gorenstein, but the converse is not true in general. However, every
numerically Gorenstein normal surface singularity {\em is}
homeomorphic to a Gorenstein one \cite{ppp}, which shows that, in
order to study their canonical contact structures, it suffices to
study those of normal Gorenstein ones.

\begin{Prop} \label {num} The canonical contact structure of a numerically
Gorenstein surface singularity is trivial, considered as a
real plane bundle.
\end{Prop}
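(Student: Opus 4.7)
The plan is to identify the canonical contact plane field $\xi$ with the restriction to the link of the anticanonical line bundle of the singularity, and then invoke Lemma~\ref{tfae} to conclude. To this end, let $V$ be a small strictly pseudoconvex neighborhood of the singular point $0$, with link $M = \bdy V$. The holomorphic tangent bundle $T^{(1,0)}V$ is a rank-$2$ complex vector bundle on the smooth locus $V \setminus \{0\}$, and restricts over $M$ to a rank-$2$ complex bundle in which the contact distribution $\xi = TM \cap J(TM)$ sits as a complex line subbundle, with $J$-invariant complement $L$ spanned (over $\bfr$) by the outward normal to $M$ in $V$ together with its $J$-image (a Reeb direction).

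The first key step will be to observe that $L$ is trivial as a complex line bundle: the real normal line bundle of $M \subset V$ is trivial, since $M$ is a co-oriented level set of a strictly plurisubharmonic function, and complexifying this trivialization trivializes $L$. Taking complex determinants of the splitting $T^{(1,0)}V|_M \cong \xi \oplus L$ then yields
\[
\xi \;\cong\; \xi \otimes L \;\cong\; \det\nolimits_{\bfc} T^{(1,0)}V\big|_M \;\cong\; K_V^{-1}\big|_M,
\]
identifying $\xi$ with the restriction of the anticanonical line bundle. Lemma~\ref{tfae} then applies: the numerically Gorenstein hypothesis forces $K_V$ to be smoothly trivial on $V \setminus \{0\}$, so $K_V^{-1}|_M$, and with it $\xi$, is trivial as a smooth complex line bundle, hence (on a $3$-manifold, where oriented real $2$-plane bundles are classified by their Euler class) trivial as an oriented real $2$-plane bundle.

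The main obstacle I anticipate is being scrupulous about the complex — not merely real — content of the splitting $T^{(1,0)}V|_M \cong \xi \oplus L$, and in particular confirming that $L$ is trivial as a \emph{complex} line bundle rather than just as a real oriented $2$-plane bundle (these coincide on $M$, but the argument deserves explicit mention). Once that splitting and triviality are in place, the proposition follows in essentially one line from the Durfee characterization encoded in Lemma~\ref{tfae}.
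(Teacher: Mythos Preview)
Your proposal is correct and follows essentially the same route as the paper: you reprove inline what the paper isolates as Lemma~\ref{anti} (identifying $\xi$ with $K^{-1}|_M$ via the splitting $T^{(1,0)}V|_M \cong \xi \oplus L$ with $L$ the complexified, hence trivial, normal line), and then conclude from the numerically Gorenstein hypothesis exactly as the paper does. The only cosmetic difference is that you invoke Lemma~\ref{tfae} explicitly to pass from triviality of the tangent bundle to triviality of $K_V$, whereas the paper treats that equivalence as part of the definition.
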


Proposition~\ref{num} follows immediately from the following result
that was communicated to us by the referee, whose proof is due to P.
Popescu-Pampu.

\begin{Lem} \label{anti}  Let $Y$ be the link of a normal surface
singularity $(S, p)$. Then the canonical contact structure on $Y$,
viewed as an abstract bundle, is smoothly isomorphic to the
restriction to $Y$ of the anticanonical bundle $K^*_{S}$ of $S$.
\end{Lem}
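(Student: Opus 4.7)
The plan is to realise $\xi_{can}$ as a complex line subbundle of $TS|_Y$ and split off a smoothly trivial complex complementary line, after which the claimed isomorphism with $K^*_S|_Y$ will fall out by taking complex determinants.

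First I would recall that, by construction, the canonical contact structure is the field of complex tangencies to $Y$: if $J$ denotes the almost complex structure on $S$, then
$$\xi_{can} = TY \cap J(TY)$$
is a complex line subbundle of $TS|_Y$, canonically defined once $Y$ is oriented as the boundary of a strictly pseudoconvex neighbourhood of $p$.

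Next I would fix a Hermitian metric on $S$ near $p$ and let $\nu$ be an outward unit normal to $Y$. Because $\nu$ is metric-orthogonal to $TY$ and $J$ preserves the metric, $J\nu$ lies in $TY$; moreover $J\nu \notin \xi_{can}$, for otherwise $\nu = -J(J\nu)$ would also lie in $TY$. Consequently
$$\bfc\nu := \bfr\nu \oplus \bfr J\nu$$
is a complex line subbundle of $TS|_Y$ complementary to $\xi_{can}$, and it is smoothly trivialised by the section $\nu$ itself. This produces a smooth decomposition
$$TS|_Y \;=\; \xi_{can} \,\oplus_{\bfc}\, \bfc\nu$$
of complex rank-two bundles over $Y$.

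To finish, I would take top complex exterior powers to obtain
$$K^*_S|_Y \;=\; \Lambda^2_{\bfc}\, TS|_Y \;\cong\; \xi_{can} \otimes_{\bfc} \bfc\nu \;\cong\; \xi_{can},$$
which is the desired smooth isomorphism of complex line bundles (and hence of oriented real plane bundles). The only delicate step is the transversality assertion $J\nu \notin \xi_{can}$, but this reduces to the one-line linear-algebra observation above about the interaction of $J$ with the metric-orthogonal complement of $TY$, so I do not anticipate any serious obstacle. The real content of the lemma is the determinantal identity, together with the fact that the outward normal direction to $Y$ provides a canonical trivialisation of a complex complementary summand in $TS|_Y$.
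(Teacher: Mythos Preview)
Your argument is correct and follows essentially the same route as the paper's proof: both split $TS|_Y$ as $\xi_{can}$ direct sum a trivial complex line spanned by the normal direction, then take the complex determinant to identify $\xi_{can}$ with $K_S^*|_Y$. The only cosmetic difference is that the paper phrases the complementary line as the complexification $\nu^{\mathbb C}$ of the real normal bundle (trivial because $Y$ is oriented), whereas you build it concretely as $\bfr\nu\oplus\bfr J\nu$ inside $TS|_Y$ using a Hermitian metric; these are the same line bundle.
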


\begin{proof} Fix an auxiliary Riemannian metric on a
neighborhood of $p \in S$ containing $Y$ and consider the (real) normal bundle
$\nu$ of $Y$ in $S$. Observe that $\nu$ is trivial since $Y$ is oriented.
Hence its complexification $\nu^{\mathbb{C}}$ is trivial as a smooth
complex bundle. Now since $\xi$ is given by complex tangencies to $Y$,
$TS|_Y = \xi \oplus {\nu}^{\mathbb{C}}$. Thus, as smooth complex bundles,
$$\Lambda^2TS|_Y \simeq \xi \otimes_{\mathbb{C}} {\nu}^\mathbb{C}\simeq \xi.$$
The lemma now follows from the fact that
$\Lambda^2 TS$ is the dual of $\Lambda^2 T^*S=K_S$.
\end{proof}

\begin{proof}[Proof of Proposition~\ref{num}]
Suppose that $(S, p)$ is numerically Gorenstein. Then $K_S$
restricted to $Y$ is smoothly trivial as a complex line bundle.
Hence $\xi$ is also smoothly trivial as a real plane bundle by
Lemma~\ref{anti}.
\end{proof}

In fact, for this dimension the converse of Proposition~\ref{num} is true, as stated in Lemma
1.1 of Durfee's paper quoted before:

\begin{Lem}[Durfee]  \label{tfae} Let $\zeta$ be a two-dimensional complex
bundle over a CW complex $X$ with $H^i(X) = 0$ for $i > 3$. Then the
following conditions are equivalent:
\begin{enumerate}[(a)]
\item $\zeta$ is trivial.
\item $\zeta$ is stably trivial.
\item The first Chern class $c_1(\zeta)$ is zero.
\item The second exterior power ${\Lambda}^2\zeta$ is a
trivial line bundle.
\end{enumerate}
\end{Lem}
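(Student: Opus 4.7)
The plan is to establish the cyclic chain of implications $(a) \Rightarrow (b) \Rightarrow (c) \Rightarrow (d) \Rightarrow (a)$. Three of these are essentially formal; the single substantive step is $(d) \Rightarrow (a)$.

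I would first dispose of the easy implications. $(a) \Rightarrow (b)$ is tautological. For $(b) \Rightarrow (c)$, I would invoke the stability of the first Chern class: if $\zeta$ becomes trivial after adding a trivial bundle then $c_1(\zeta) = 0$. For $(c) \Rightarrow (d)$, I would note the identity $c_1(\Lambda^2 \zeta) = c_1(\zeta) = 0$ together with the fact that a complex line bundle over a CW complex is classified up to isomorphism by its first Chern class in $H^2(X;\mathbb{Z})$; this forces $\Lambda^2 \zeta$ to be trivial.

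The main work is $(d) \Rightarrow (a)$. The idea is to reduce the structure group: triviality of the determinant line bundle $\det\zeta = \Lambda^2\zeta$ is precisely the obstruction to reducing the structure group of $\zeta$ from $U(2)$ to $SU(2)$, so a chosen trivialization of $\Lambda^2\zeta$ produces a classifying map $\varphi \colon X \to BSU(2)$ for $\zeta$. Since $SU(2) \cong S^3$, the space $BSU(2)$ is simply connected and in fact $3$-connected, with first nonvanishing homotopy group $\pi_4(BSU(2)) = \pi_3(S^3) = \mathbb{Z}$. Standard obstruction theory then expresses the vanishing of $[\varphi] \in [X, BSU(2)]$ (equivalently, the triviality of $\zeta$) as the vanishing of a sequence of obstructions lying in $H^n(X;\pi_n(BSU(2)))$ for $n \geq 4$. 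The hypothesis $H^i(X) = 0$ for $i > 3$ kills every one of these groups, so $\varphi$ is null-homotopic and $\zeta$ is trivial.

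The main obstacle is the $(d) \Rightarrow (a)$ step. One must check carefully that the reduction of structure group is genuinely available once $\det\zeta$ has been trivialized, and that the $3$-connectivity of $BSU(2)$ really does push the primary obstruction out of the range where the cohomology of $X$ has been assumed nonzero; the other three implications are standard characteristic-class bookkeeping.
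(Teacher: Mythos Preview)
The paper does not supply its own proof of this lemma: it is stated with attribution to Durfee and no argument is given. Your cyclic chain $(a)\Rightarrow(b)\Rightarrow(c)\Rightarrow(d)\Rightarrow(a)$ is correct and is the standard way to prove it; the substantive step $(d)\Rightarrow(a)$ via reduction of the structure group to $SU(2)$ and obstruction theory on the $3$-connected space $BSU(2)$ is exactly how one argues this.

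One small point worth making explicit in your write-up: the obstructions for $n\geq 5$ lie in $H^n\bigl(X;\pi_n(BSU(2))\bigr)$ with various torsion coefficient groups (for instance $\pi_5(BSU(2))\cong\mathbb{Z}/2$), so strictly speaking you need vanishing of cohomology in degrees $>3$ with arbitrary coefficients, not just integral coefficients. For a finite CW complex --- in particular for the $3$-manifold links that are the paper's actual concern --- this follows from the integral hypothesis by the universal coefficient theorem, since $H^i(X;\mathbb{Z})=0$ for $i>3$ forces $H_i(X;\mathbb{Z})=0$ for $i\geq 4$ and $H_3(X;\mathbb{Z})$ free.
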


In this article, we consider canonical contact structures on links of
simple elliptic and cusp singularities. As every simple elliptic and cusp singularity
is minimally elliptic, it follows by Laufer (\cite{la}) that every such singularity is
Gorenstein and hence {\it a fortiori} numerically Gorenstein.

\section{Simple elliptic and cusp singularities}

Part of the discussion in this article is based on the topological
characterization of singularity links which fiber over the circle
given by Neumann \cite{neum}: A singularity link fibers over the
circle if and only if it is a torus bundle over the circle whose
monodromy $A\in \SL(2,\mathbb{Z})$ is either parabolic, i.e., $\tr
(A) =2$ or hyperbolic with $\tr (A) \geq 3$. Moreover these links
correspond precisely to simple elliptic and cusp singularities,
respectively.

On the other hand, the classification of tight contact structures on
torus bundles by Honda \cite{h2} coupled with a theorem of Gay
\cite{gay} implies that on any torus bundle over the circle there is
a unique Stein fillable universally tight contact structure up to
isomorphism. We conclude that on a singularity link which fibers
over the circle, the canonical contact structure is the unique
universally tight Stein fillable contact structure.

In the following subsections we determine the canonical contact
structures on simple elliptic and cusp singularities---each of which
requires a separate treatment.

\subsection{Simple elliptic singularities}

For a positive integer $n$, let $(S_n,p)$ denote the complex surface
singularity whose minimal resolution consists of a single elliptic
curve of \emph{negative} self-intersection number $-n$. Such
singularities are known as {\em simple elliptic singularities}. The
link $Y_n$ of such a singularity is a torus bundle over the circle
with parabolic monodromy $A\in \SL(2,\mathbb{Z})$. Moreover, the
$3$-manifold $Y_n$ also admits an oriented circle fibration over the
torus with Euler number $-n$.

An open book decomposition $\OB_n$ of $Y_n$ transverse to the circle
fibration was constructed  in \cite{eo} such that the binding
consists of $n$ distinct \emph{positively oriented} circle fibres,
the page is a torus with $n$ boundary components and the monodromy
is the product of $n$ boundary parallel right-handed Dehn twists.
 According to \cite[Theorem 2.1]{np},
 there is a Milnor open book $\widetilde{\OB}_n$ on $Y_n$
whose binding agrees with the binding of $\OB_n$. On the other hand,
by \cite[Theorem 4.6]{cnp}, any two horizontal open books on $Y_n$
with the same binding are isomorphic. It follows that $\OB_n$ is in
fact a Milnor open book and hence it supports the canonical contact
structure which is certainly Stein fillable and universally tight.

\begin{Prop} There are precisely two distinct isotopy classes of Stein
fillable universally tight contact structures on $Y_n$. These are
given as the boundaries of the two distinct Stein surfaces shown in
Figure~\ref{disk}. Both of these contact structures represent the
isomorphism class of the  canonical contact structure $\xi_{can}$ on
$Y_n$.
\end{Prop}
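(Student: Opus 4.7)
The plan is to break the proposition into four logical steps and distribute them over three paragraphs. First, I would verify that each of the two diagrams in Figure~\ref{disk} really does present a Stein surface with boundary $Y_n$. Because in a Legendrian handlebody diagram every $2$-handle is attached along its Legendrian attaching knot with framing $\tb-1$, the Stein condition is automatic once a valid diagram is drawn. The remaining verification is smooth and handled by Kirby calculus: I would check that each underlying $4$-manifold is the $-n$ disk bundle over $T^2$, whose boundary is the circle bundle over $T^2$ with Euler number $-n$; as observed in the preceding paragraphs, this boundary is diffeomorphic to $Y_n$.

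Second, I would identify each of the two induced contact structures with $\xi_{can}$ up to isomorphism. By Eliashberg's theorem each Stein filling induces a Stein fillable contact structure on $Y_n$; the paper has already observed, combining Honda \cite{h2} and Gay \cite{gay}, that $Y_n$ admits a unique Stein fillable universally tight contact structure up to isomorphism, and that this structure coincides with $\xi_{can}$. It therefore suffices to verify that each of the two boundary contact structures is universally tight, which can be read off from Honda's list. Third, to separate the two structures up to isotopy, I would exploit the fact that the two diagrams are designed with $2$-handle attaching Legendrian knots having the same Thurston--Bennequin number (so that both smooth $4$-manifolds are the same $-n$ disk bundle over $T^2$) but different rotation numbers. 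Applying Gompf's formula for $c_1$ of a Stein surface in terms of rotation numbers and pairing against a $2$-cycle whose boundary represents a chosen generator of $H_2(Y_n;\bfz)$, I would compute $c_1$ of each filling and then restrict to $Y_n$; a mismatch certifies that the contact structures are not isotopic, since $c_1$ of a contact structure is an isotopy invariant.

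Fourth, to see that no further isotopy classes of Stein fillable universally tight contact structures arise, I appeal once more to Honda's classification \cite{h2}, which in the parabolic case produces exactly two isotopy classes of Stein fillable universally tight contact structures (all in a single isomorphism class), matching the two exhibited by the diagrams. The step I expect to be most delicate is the third one: one has to make sure that the Chern class difference computed inside the filling really survives restriction to the boundary, since $H^2(Y_n;\bfz)$ carries torsion arising from the $-n$ circle bundle structure. Should $c_1$ alone fail to distinguish the two boundary structures, I would fall back on the finer Gompf $d_3$-type homotopy invariant of $2$-plane fields, computed from each diagram via the signature and Euler characteristic of the filling together with the rotation numbers, which is guaranteed to separate the two isotopy classes picked out by Honda's list.
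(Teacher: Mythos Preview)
Your third step contains a genuine gap, and the fallback does not rescue it. Both contact structures in question are isomorphic to $\xi_{can}$, which by Proposition~\ref{num} is trivial as a real plane bundle; hence the restriction of $c_1$ to $Y_n$ vanishes for \emph{both} fillings, and no mismatch can arise on the boundary. Your fallback to the $d_3$-invariant fails for the same symmetry reason: the two Stein structures on $X_n$ differ only in the sign of the rotation number (so only in the sign of $c_1$), while $\chi(X_n)$ and $\sigma(X_n)$ are fixed; since $d_3$ is computed from $c_1^2$, $\chi$, and $\sigma$, it takes the same value on both. More conceptually, the two structures are $\xi_{can}$ and $\overline{\xi}_{can}$, and these are exchanged by an orientation-preserving self-diffeomorphism of $Y_n$, so no invariant of oriented $2$-plane fields that is natural under diffeomorphisms can separate them. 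The paper instead invokes the theorem of Lisca--Mati\'c \cite{lm}: two Stein structures on the \emph{same} smooth $4$-manifold with distinct first Chern classes induce nonisotopic contact structures on the boundary. This uses $c_1$ of the filling itself, not its restriction, and that is exactly what is needed here.

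Your second step also has a gap. Saying that universal tightness ``can be read off from Honda's list'' is not a proof: Honda's classification is phrased in terms of basic slices and dividing-set data, not Legendrian surgery diagrams, and matching a given diagram to an entry on the list is nontrivial. The paper proceeds by elimination instead: it Legendrian-realizes the attaching circle with every admissible rotation number in $\{-n,-(n-2),\ldots,n-2,n\}$, obtains $n+1$ Stein structures on $X_n$ with pairwise distinct $c_1$, concludes via Lisca--Mati\'c that the $n+1$ induced contact structures are pairwise nonisotopic, and then invokes \cite[Proposition~5.1]{go} to see that all but the two extremal ones are virtually overtwisted. Since Honda's count gives exactly $n+1$ Stein fillable isotopy classes with exactly two universally tight, the extremal pair must be the universally tight pair. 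For the final identification of both with $\xi_{can}$ up to isomorphism, your argument via uniqueness is correct in outline once the above is in place; the paper gives an alternative open-book argument showing directly that $\overline{\xi}_{can}$ is isomorphic but not isotopic to $\xi_{can}$.
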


\begin{figure}[ht]
  \relabelbox \small {%\epsfxsize=4.5in
  \centerline{\epsfbox{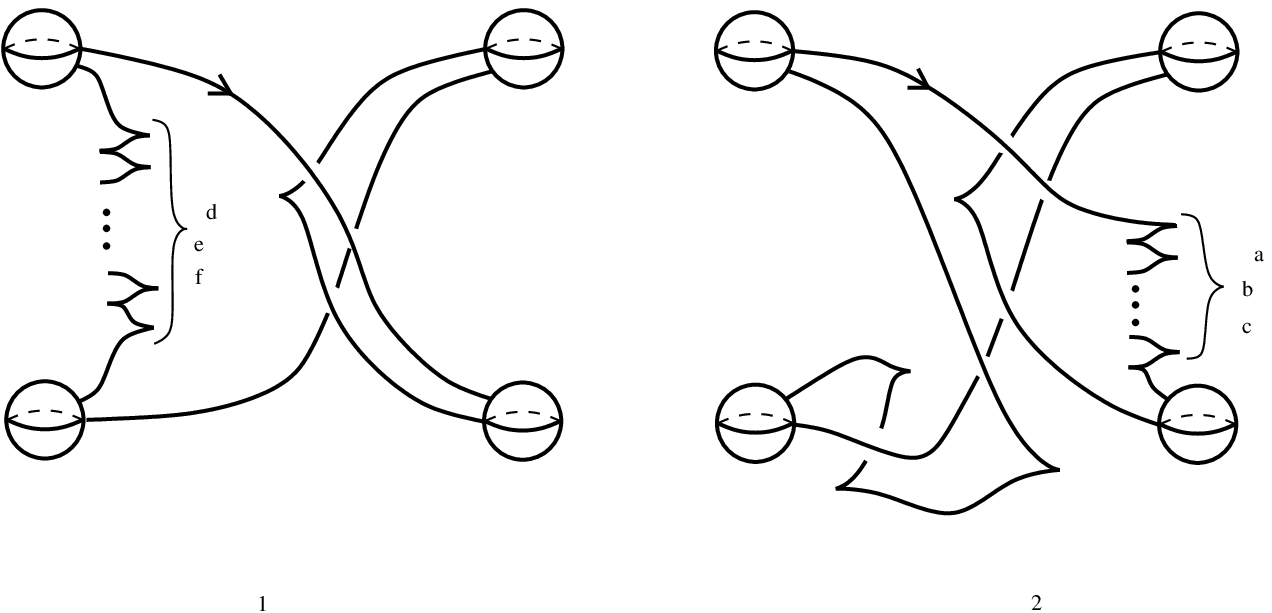}}}

\relabel{a}{$n$}

\relabel{b}{right}

\relabel{c}{cusps}

\relabel{d}{$n$}

\relabel{e}{right}

\relabel{f}{cusps}

\relabel{1}{$(a)$}

\relabel{2}{$(b)$}

  \endrelabelbox
        \caption{Legendrian handlebody diagrams for two distinct Stein structures on
        $X_n$, both inducing the canonical contact structure $\xi_{can}$ on $Y_n$, up to isomorphism. (We
        identify the spheres aligned \emph{horizontally} in both diagrams (a) and (b).)}
        \label{disk}
\end{figure}

\begin{proof}  There are $n+1$ distinct isotopy classes of Stein
fillable contact structures on $Y_n$, two of which are universally
tight according to Honda's classification \cite{h2}. Let $X_n$
denote the oriented $D^2$-bundle over $T^2$ with Euler number $-n$,
a smooth handlebody diagram of which is depicted in \cite[Figure 36
(a)]{go}. In order to obtain Legendrian handlebody diagrams of
distinct Stein surfaces inducing the aforementioned distinct contact
structures on $Y_n$, we use Gompf's trick illustrated in
\cite[Figure 36 (b), (c)]{go}:  We just convert the standard smooth
handlebody diagram of $X_n$ (which consists of two $1$-handles and
one $2$-handle) into a Legendrian handlebody diagram by Legendrian
realizing the attaching circle of the $2$-handle. It is easy to see
that any integer which belongs to the set $$\{-n, -(n-2), \ldots,
n-2, n\}$$ appears as the rotation number of some Legendrian
realization.

According to \cite[Proposition 2.3]{go}, the first Chern class of
the resulting Stein structure on the smooth $4$-manifold $X_n$ is
represented by a cocycle whose value on the homology class induced
by the oriented Legendrian knot (generating $H_2(X_n;
\mathbb{Z})\cong \mathbb{Z}$) is given by the rotation number of
this Legendrian knot. Therefore by \cite[Theorem 1.2]{lm}, the
induced contact structures on $Y_n$ are all pairwise nonisotopic for
all distinct rotation numbers that are listed above.  Moreover, all
of these contact structures except the two that are induced by the
Stein surfaces depicted in Figure~\ref{disk} are virtually
overtwisted by \cite[Proposition 5.1]{go}.

The rotation numbers of the Legendrian knots in Figure~\ref{disk}(a)
and Figure~\ref{disk}(b) are $-n$ and $n$, respectively, with the
indicated orientations. Here if one reverses the orientation of the
Legendrian knot, the sign of the rotation number as well as the sign
of the second homology class induced by this knot in
$H_2(X_n;\mathbb{Z})$ gets reversed. In conclusion, the two extreme
cases where the rotation number of the Legendrian realization of the
attaching circle of the $2$-handle takes its minimal possible value
$-n$ and maximum possible value $n$, respectively, must induce the
two universally tight Stein fillable contact structures on $Y_n$ in
Honda's classification.

To prove the last statement in the proposition, we first observe
that one of two nonisotopic Stein fillable universally tight contact
structures on $Y_n$ is $\xi_{can}$. Let  $\overline{\xi}_{can}$
denote the $2$-plane field  $\xi_{can}$ with the opposite
orientation.  Recall that $\xi_{can}$ is supported by the Milnor
open book $\OB_n$. By reversing the orientation of the page (and
hence the orientation of the binding) of $\OB_n$  we get another
open book $\overline{\OB}_n$ on $Y_n$. The open book
$\overline{\OB}_n$ is in fact isomorphic to $\OB_n$, since they have
identical pages and the same monodromy map measured with the
respective orientations. To see that $\overline{\OB}_n$ is also
horizontal we simply reverse the orientation of the fibre (to agree
with the orientation of the binding) as well as the orientation of
base $T^2$ of the circle bundle $Y_n$, so that we do not change the
orientation of $Y_n$. In addition we observe that the contact
structure supported by $\overline{\OB}_n$ can be obtained from
$\xi_{can}$ by changing the orientations of the contact planes since
$\overline{\OB}_n$ is obtained from $\OB_n$ by changing the
orientations of the pages. We conclude that $\overline{\xi}_{can}$
is a contact structure on $Y_n$ that is isomorphic (but
\emph{nonisotopic}) to $\xi_{can}$.
\end{proof}

Notice that we can trade the $1$-handles in the handlebody diagram
depicted in Figure~\ref{disk} with contact $(+1)$-surgeries as
described in \cite{dg2} and obtain the contact surgery diagram of
$\xi_{can}$ on $Y_n$ as shown in Figure~\ref{disk-wout}.

\begin{figure}[ht]
  \relabelbox \small {%\epsfxsize=4.5in
  \centerline{\epsfbox{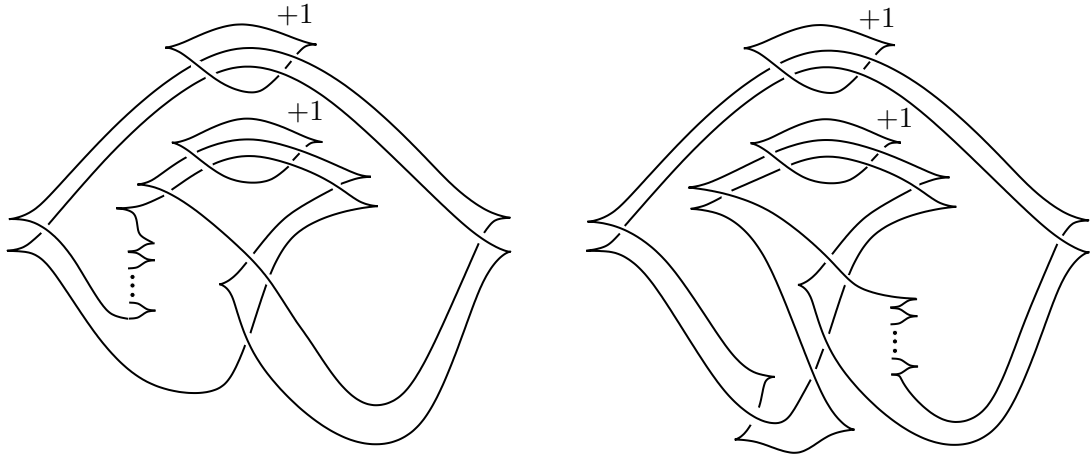}}}

\relabel{b}{$+1$}

\relabel{c}{$+1$}

\relabel{a}{$+1$}

\relabel{d}{$+1$}

  \endrelabelbox
        \caption{Two contact surgery diagrams inducing non-isotopic contact structures that are isomorphic to the canonical contact structure $\xi_{can}$ on $Y_n$.}
        \label{disk-wout}
\end{figure}

\begin{Rem} Simple elliptic singularities are Gorenstein, and by a result
of Seade \cite{se}, the canonical class of any smoothing of $S_n$ is
trivial. Combining this with a result of Pinkham \cite{pi}, which
states that $S_n$ admits a smoothing if and only if $n\leq 9$, we
see that the Euler class of the canonical contact structure on $Y_n$
vanishes if $n\leq 9$. Proposition~\ref{num} implies that the Euler
class of the canonical contact structure on $Y_n$ vanishes in the
case $n > 9$ as well.
\end{Rem}

We would like to point out that the first Chern classes of both
Stein fillings of the contact $3$-manifold $(Y_n, \xi_{can})$
depicted in Figure~\ref{disk} are non-vanishing, although their
restriction to the boundary---the Euler class of
$\xi_{can}$---vanishes.

\begin{Rem} Ohta and Ono \cite{oo} showed that $(Y_n,\xi_{can})$ admits a
strong symplectic filling with vanishing first Chern class if and
only if $n\leq 9$. Moreover they proved that any such filling is
diffeomorphic to a smoothing of the singularity---which is unique
unless $n=8$. For $1 \leq n \leq 9$, a Stein filling of
$(Y_n,\xi_{can})$ with vanishing first Chern class  can be
constructed as a PALF (positive allowable Lefschetz fibration
\cite{ao}) using the $n$-holed torus relation discovered in
\cite{ko}.
\end{Rem}

\subsection{Cusp singularities}

A normal surface singularity having a resolution with exceptional
divisor consisting of a cycle of smooth rational curves, or with
exceptional divisor a single rational curve with a node is called a
{\em cusp singularity}. The link of a cusp singularity also fibers
over the circle with torus fibers and hyperbolic monodromy $A\in
\SL(2,\mathbb{Z})$. Based on a factorization of the monodromy
$A=A(n_1,\ldots, n_k)$, a description of this torus bundle is given
as a circular plumbing graph in Neumann's paper \cite{neum}. For $k
> 1$, the Euler number of the $i$th vertex in the plumbing is equal
to $-n_i$, as depicted in Figure~\ref{circ}$(a)$, where $n_i \geq 2$
for all $i$, and $n_i \geq 3$ for some $i$. For $k=1$, the plumbing
graph consists of a single vertex decorated with an integer
$-n_1\leq -3$ and a loop at this vertex as shown in
Figure~\ref{circ}$(b)$---which corresponds to a self-plumbing of an
oriented circle bundle over $S^2$ with Euler number $-n_1$. To
simplify the notation, we denote the total space of this torus
bundle by $Y_{\overline{n}}$, where $\overline{n}= (n_1,\ldots,
n_k)$.

\begin{figure}[ht]
  \relabelbox \small {%\epsfxsize=4.5in
  \centerline{\epsfbox{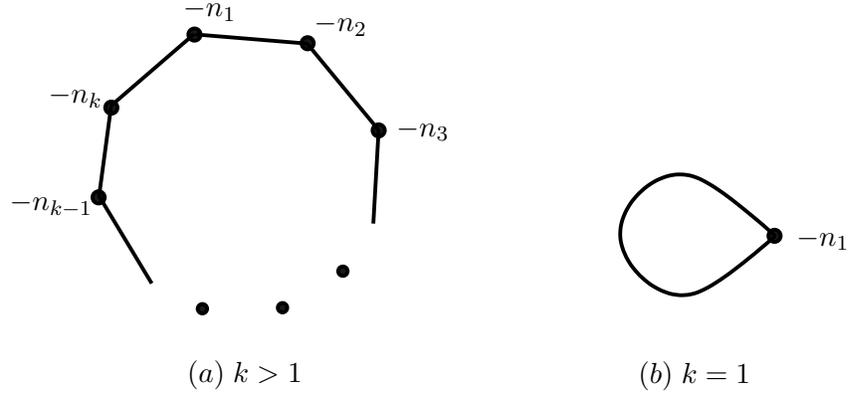}}}

\relabel{g1}{$-n_{k-1}$}

\relabel{g2}{$-n_{k}$}

\relabel{g3}{$-n_1$}

\relabel{g4}{$-n_2$}

\relabel{g5}{$-n_3$}

\relabel{g6}{$-n_1$}

\relabel{A}{$(a)\;  k > 1$ }

\relabel{B}{$(b) \; k = 1$ }

  \endrelabelbox
        \caption{The circular plumbing graph $\Gamma_{\overline{n}}$ for the singularity link $Y_{\overline{n}}$. The vertex
        labeled by $-n_i$ represents a circle bundle over $S^2$ with Euler number $-n_i$. }
        \label{circ}
\end{figure}

\begin{Lem} \label{el} There is a horizontal open book decomposition
$\OB_{\overline{n}}$ on $Y_{\overline{n}}$ with page-genus one,
whose monodromy is given explicitly as a product of some
right-handed Dehn twists.
\end{Lem}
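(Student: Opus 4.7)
The plan is to construct $\OB_{\overline{n}}$ directly from the circular plumbing description of $Y_{\overline{n}}$. First I cut $Y_{\overline{n}}$ along the $k$ plumbing tori to obtain pieces $M_1, \ldots, M_k$ arranged cyclically; each $M_i$ is the restriction of the oriented $S^1$-bundle with Euler number $-n_i$ over $S^2$ to the base with two open disc neighborhoods of the plumbing points removed. Because this restricted base has non-empty boundary, $M_i$ is a trivial $S^1$-bundle, i.e.\ diffeomorphic to $S^1 \times \Sigma_{0,2}$.

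In each $M_i$ I select $n_i - 2$ distinct regular $S^1$-fibres---none when $n_i = 2$---and let the binding $B$ be the disjoint union of all such fibres over $i = 1, \ldots, k$; this $B$ is non-empty by the hypothesis that $n_i \geq 3$ for some $i$. Within each $M_i$ I then take a horizontal section $F_i$ of the trivial $S^1$-bundle $M_i \setminus \nu(B \cap M_i) \to \Sigma_{0,n_i}$ over the base with two plumbing discs and $n_i - 2$ fibre discs removed. Choosing each $F_i$ so that its boundary slopes on the two plumbing tori agree with those of the neighboring sections under the plumbing identifications, the $F_i$ assemble into a properly embedded horizontal surface $F \subset Y_{\overline{n}}$ with $\partial F = B$.

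An Euler characteristic computation gives
\[
\chi(F) = \sum_{i=1}^k \chi(F_i) = \sum_{i=1}^k (2 - n_i) = 2k - \sum_{i=1}^k n_i,
\]
while $|\partial F| = \sum_{i=1}^k (n_i - 2)$; substituting into $\chi(F) = 2 - 2 g(F) - |\partial F|$ then forces $g(F) = 1$. The bundle projections from the various pieces fit together to give an $S^1$-valued fibration of $Y_{\overline{n}} \setminus B$ with page $F$, so $(B, F, \phi)$ is the desired horizontal open book $\OB_{\overline{n}}$ of page-genus one.

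Finally, $\phi$ is computed locally using horizontality: near each of the $\sum (n_i - 2)$ binding components the page spirals in the positive fibre direction forced by the negative Euler number $-n_i$, yielding a right-handed boundary-parallel Dehn twist, and each of the $k$ plumbing tori meets $F$ in an essential simple closed curve along which the first-return map contributes a further right-handed Dehn twist. The step I expect to be the main obstacle is matching the section boundary slopes on the plumbing tori consistently around the cycle and converting this matching into an \emph{explicit} positive factorization of $\phi$; this reduces to a finite book-keeping computation in the plumbing calculus, using Neumann's explicit factorization $A = A(n_1, \ldots, n_k)$ of the hyperbolic monodromy \cite{neum}.
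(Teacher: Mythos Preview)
Your construction is essentially the same as the paper's: both build the horizontal open book from the circular plumbing by assembling horizontal pieces over the individual vertices, arriving at a genus-one page with $\sum_i(n_i-2)$ boundary components and monodromy the product of right-handed Dehn twists along the $k$ curves coming from the plumbing tori together with the $\sum_i(n_i-2)$ boundary-parallel curves. The paper, however, takes as building blocks the horizontal open books on the \emph{full} circle bundles over $S^2$ with Euler number $-n_i$ (page an $n_i$-holed sphere, monodromy the product of $n_i$ boundary-parallel right twists) supplied by \cite{eo}, and then glues these along binding components; this packaging absorbs precisely the slope-matching/book-keeping step you flag as the ``main obstacle'' and yields the explicit factorization directly, so nothing is deferred. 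The paper also treats the case $k=1$ separately (self-plumbing of a single bundle), which your write-up does not address.
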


\begin{proof}
We will construct a horizontal open book  $\OB_{\overline{n}}$ with
page-genus one using the methods in \cite{eo}. We first consider the
case $k>1.$ Notice that the $i$th vertex in the plumbing is a circle
bundle over the sphere with Euler number $-n_i \leq -2$. For such a
circle bundle there is a horizontal open book $\OB_i$ where the page
is a sphere with $n_i$ boundary components and the monodromy is the
product of $n_i$ boundary parallel right-handed Dehn twists.

\begin{figure}[ht]
  \relabelbox \small {%\epsfxsize=4.5in
  \centerline{\epsfbox{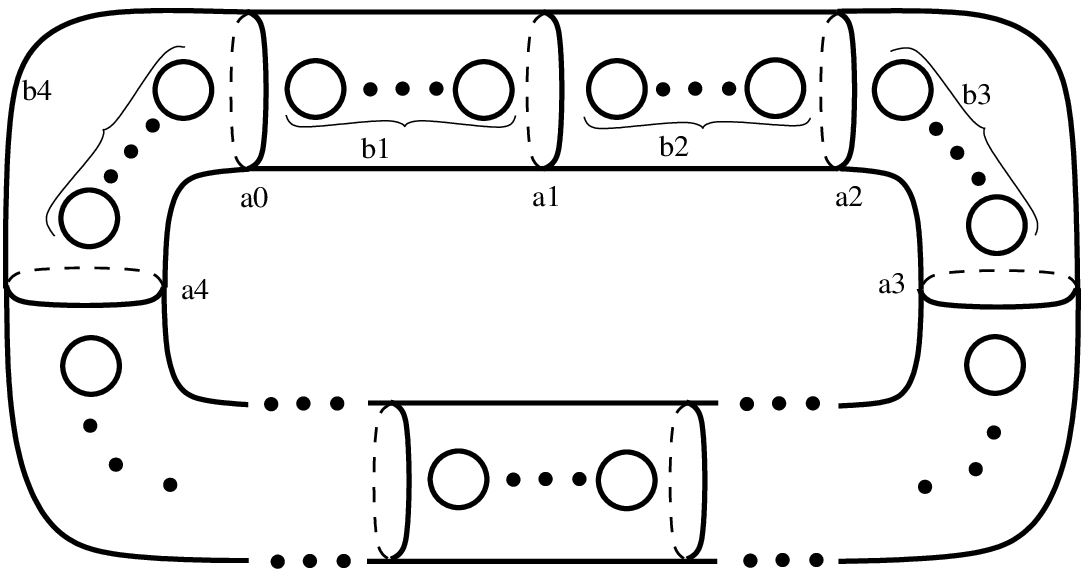}}}

\relabel{b1}{$n_1-2$}

\relabel{b2}{$n_2-2$}

\relabel{b3}{$n_3-2$}

\relabel{b4}{$n_k-2$}

\relabel{a0}{$\d_0$}

\relabel{a1}{$\d_1$}

\relabel{a2}{$\d_2$}

\relabel{a3}{$\d_3$}

\relabel{a4}{$\d_{k-1}$}

  \endrelabelbox
        \caption{The page of the open book $\OB_{\overline{n}}$ of $Y_{\overline{n}}$}
        \label{ellip}
\end{figure}

When two consecutive circle bundles with Euler numbers $-n_i$ and
$-n_{i+1}$ are connected by an edge in the plumbing graph
$\Gamma_{\overline{n}}$, we can ``glue" the corresponding open books
together as follows: First of all, for $i=1,\ldots,k-1$, we glue a
page of $\OB_i$ with a page of $\OB_{i+1}$ using precisely one
boundary component from each page. The Dehn twists along the
identified boundary components merge into a single right-handed Dehn
twist along the resulting curve $\d_i$ (see Figure~\ref{ellip})
after gluing the pages. By gluing all the open books corresponding
to the vertices $i=1,\ldots,k-1$, we get a planar open book. But
since the plumbing graph is circular we need to glue the $\OB_k$
with $\OB_1$ along $\d_0$ so that the resulting page of the open
book $\OB_{\overline{n}}$ is a torus with $\sum_{i=1}^{k} (n_i -2)$
many boundary components as illustrated in Figure~\ref{ellip}. Let
$\g_{i,1},\g_{i,2},\ldots,\g_{i,n_i-2}$ be the boundary-parallel
curves between $\d_{i-1}$ and $\d_{i}$ for $1 \leq i \leq k-1$, and
let $\g_{k,1}, \g_{k,2}, \ldots, \g_{k,n_{k}-2}$ be the boundary-parallel curves between $\d_{k-1}$ and $\d_0$. Then the monodromy of
$\OB_{\overline{n}}$ is given by
$$\prod_{i=0}^{k-1} D(\d_i )\prod_{i=1}^{k}\prod_{j=1}^{n_i-2}
D(\g_{i,j}),$$ where $D(.)$ denotes a right-handed Dehn twist.

We now consider the case $k=1$. Start off with a circle bundle over
a sphere with Euler number $-n_1\leq -3$. As before, for such a
circle bundle there is a horizontal open book $\OB_1$ where the page
is a sphere with $n_1$ boundary components and the monodromy is the
product of $n_1$ boundary parallel right-handed Dehn twists. Adding
a self-plumbing corresponds to gluing together two boundary
components of $\OB_1$ and replacing the corresponding two
boundary-parallel Dehn twists by a single right-handed Dehn twist
along the resulting curve $\d_0$. Let $\g_1,\g_2,\ldots,\g_{n_1-2}$
denote curves parallel to the remaining boundary curves. The
monodromy of the resulting horizontal open book $\OB_{(n_1)}$ is
then given by
$$ D(\d_0)\prod_{i=1}^{n_1-2}D(\g_i),$$

\noindent as shown in Figure~\ref{k1case}.

\begin{figure}[ht]
  \relabelbox \small {%\epsfxsize=4.5in
  \centerline{\epsfbox{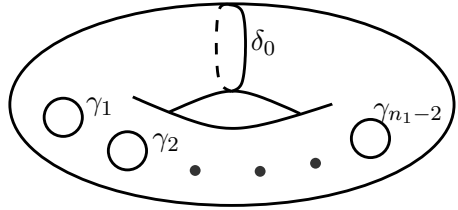}}}

\relabel{1}{$\g_1$}

\relabel{2}{$\g_2$}

\relabel{3}{$\g_{n_1-2}$}

\relabel{4}{$\d_0$}

  \endrelabelbox
        \caption{The page of the open book $\OB_{(n_1)}$ of $Y_{(n_1)}$ }
        \label{k1case}
\end{figure}
\end{proof}

\begin{Rem} The case $k=1$ corresponds to a normal surface singularity having a
resolution with exceptional divisor consisting of a single rational curve
with a node and having self-intersection $-n_1+2\leq -1$. Blowing up
gives the minimal {\em good} resolution with exceptional divisor consisting of two nonsingular irreducible rational curves with
self-intersections $-1$ and $-n_1-2\leq -5$ and dual graph a cycle. For the
minimal good resolution,
the method in \cite{eo} is no longer applicable; however,
the method in \cite{b} is still applicable and gives
an alternative way of constructing the open
book decomposition $\OB_{(n_1)}$ on $Y_{(n_1)}$.
\end{Rem}

\begin{Rem} According to \cite[Thm 4.3.1]{vhm}, the open book
$\OB_{\overline{n}}$ we constructed on $Y_{\overline{n}}$ is
compatible with a universally tight contact structure. This fact also follows by Lemma~\ref{mil} below.
\end{Rem}

\begin{Lem} \label{mil} The open book $\OB_{\overline{n}}$ on
$Y_{\overline{n}}$ is a Milnor open book.
\end{Lem}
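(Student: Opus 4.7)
The plan is to mirror the strategy used earlier for simple elliptic singularities: exhibit a Milnor open book on $Y_{\overline{n}}$ whose binding coincides, up to isotopy, with the binding of $\OB_{\overline{n}}$, and then appeal to the uniqueness of horizontal open books with a given binding.

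First, I would identify the binding of $\OB_{\overline{n}}$ precisely within the graph manifold structure of $Y_{\overline{n}}$. In the construction of Lemma~\ref{el}, the $i$th Seifert fibered piece of $Y_{\overline{n}}$ contributes an open book $\OB_i$ whose page is an $n_i$-holed sphere, and two of these boundary components get identified with boundary components of neighboring pages during the circular plumbing (when $k>1$), or with another boundary component of $\OB_1$ via self-plumbing (when $k=1$). Hence exactly $n_i - 2$ positively oriented regular fibers of the $i$th piece survive in the binding of $\OB_{\overline{n}}$, giving a total of $\sum_{i=1}^{k}(n_i-2)$ binding components, distributed with prescribed multiplicities over the Seifert pieces of the graph manifold.

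Next, I would invoke \cite[Theorem 2.1]{np}, which produces Milnor open books on plumbed $3$-manifolds whose binding consists of any prescribed collection of positively oriented fibers on the Seifert pieces. Feeding in the multiplicities $n_i-2$ yields a Milnor open book $\widetilde{\OB}_{\overline{n}}$ on $Y_{\overline{n}}$ whose binding is isotopic to the binding of $\OB_{\overline{n}}$. Since both $\OB_{\overline{n}}$ and $\widetilde{\OB}_{\overline{n}}$ are horizontal open books with the same binding, \cite[Theorem 4.6]{cnp} forces them to be isomorphic as open books, so $\OB_{\overline{n}}$ is a Milnor open book.

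The step I expect to require the most care is confirming that the multiplicities $n_i-2$ are genuinely admissible binding data in the sense of \cite{np}, and that no positivity or minimality hypothesis of their construction is violated. This is particularly delicate in the case $k=1$, where the plumbing graph carries a loop; there it may be cleaner to pass to the minimal good resolution indicated in the Remark following Lemma~\ref{el}, whose dual graph is an honest cycle without self-loops, apply \cite[Theorem 2.1]{np} on that auxiliary plumbing, and transfer the conclusion back to $Y_{(n_1)}$ via the blow-down.
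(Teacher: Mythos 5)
Your proposal follows the paper's proof essentially verbatim: the authors likewise invoke \cite[Theorem 2.1]{np} to produce a Milnor open book $\widetilde{\OB}_{\overline{n}}$ whose binding agrees with that of $\OB_{\overline{n}}$, and then conclude by the uniqueness of horizontal open books with prescribed binding. The one point worth flagging is that the paper does not apply \cite[Theorem 4.6]{cnp} as a black box to the torus bundle $Y_{\overline{n}}$ (nor does it need your detour through the minimal good resolution when $k=1$); instead it decomposes $Y_{\overline{n}}$ along the circular plumbing into pieces $V_i$ (each an $S^1$-bundle over a twice-punctured sphere), verifies that every page of $\OB_{\overline{n}}$ meets each $V_i$ in exactly one component, and then runs the \emph{argument} of the proof of that theorem---a small verification your sketch leaves implicit.
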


\begin{proof} According to \cite[Theorem 2.1]{np}, there is an analytic structure
$(Z,p)$ on the cone over $Y_{\overline{n}}$ and a corresponding
Milnor open book $\widetilde{\OB}_{\overline{n}}$ on
$Y_{\overline{n}}$ whose binding agrees with the binding of
$\OB_{\overline{n}}$. Now the circular plumbing graph
$\Gamma_{\overline{n}}$ provides a decomposition of
$Y_{\overline{n}}$ into a union $\bigcup V_i$, where $V_i$ is an
$S^1$-bundle over $S^2$ with 2 discs removed. Since any page of
$\OB_{\overline{n}}$ intersects in $V_i$ in exactly one component
for each $i$, by the argument of the proof of Theorem 4.6 of
\cite{cnp}, any horizontal open book whose binding agrees with the
binding of $\widetilde{\OB}_{\overline{n}}$  must be isomorphic to
$\widetilde{\OB}_{\overline{n}}$ . Thus $\OB_{\overline{n}}$ is
indeed a Milnor open book.
\end{proof}

\begin{Prop} \label{uyt} There are precisely two distinct isotopy classes
of Stein fillable universally tight contact structures on
$Y_{\overline{n}}$. One of them is the contact structure induced on
the boundary of the Stein surface depicted in Figure~\ref{hypdiag}
where each Legendrian $2$-handle attains its minimal possible
rotation number. The other one is obtained, similarly,  by achieving
the  maximal possible rotation number for each Legendrian
$2$-handle. Both of these contact structures represent the
isomorphism class of the  canonical contact structure $\xi_{can}$ on
$Y_{\overline{n}}$.
\end{Prop}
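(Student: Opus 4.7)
The plan is to mimic the argument used in the simple elliptic case. First I would invoke Honda's classification \cite{h2} of tight contact structures on torus bundles with hyperbolic monodromy, together with Gay's theorem \cite{gay}, which already appears in the preamble to this section and tells us that on $Y_{\overline{n}}$ there is a unique Stein fillable universally tight contact structure up to isomorphism. Combined with Lemma~\ref{mil} (so that $\xi_{can}$ is indeed Stein fillable and universally tight), this forces the isotopy class count to be either one or two. The strategy is to exhibit two non-isotopic such structures and then identify them with each other up to isomorphism via the orientation-reversal trick of the elliptic subsection.

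Next I would produce the Stein fillings. The circular plumbing $\Gamma_{\overline{n}}$ gives a natural smooth $4$-manifold $X_{\overline{n}}$ whose Kirby diagram consists of a single $1$-handle (needed to close the cycle in the graph) together with $k$ linearly Hopf-chained $2$-handles with framings $-n_1, \ldots, -n_k$; for $k=1$ the $2$-handle loops through the $1$-handle as a self-plumbing. Applying Gompf's trick \cite[Figure~36]{go} I would Legendrian-realize each attaching circle as a Legendrian unknot with $\tb = -(n_i - 1)$, for which the allowable rotation numbers form the set $\{-(n_i-2), -(n_i-4), \ldots, n_i - 2\}$, so each $2$-handle admits $n_i - 1$ choices. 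This yields a family of Stein surfaces $X_{\overline{n}}$ filling $Y_{\overline{n}}$ indexed by tuples $(r_1, \ldots, r_k)$ of rotation numbers.

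I would then distinguish the induced boundary contact structures by computing the first Chern class via \cite[Proposition~2.3]{go}: the value of $c_1$ on the $i$th $2$-homology generator equals $r_i$, so distinct tuples give Stein fillings with distinct $c_1$; by \cite[Theorem~1.2]{lm} the induced contact structures on $Y_{\overline{n}}$ are then pairwise non-isotopic. Gompf's virtually overtwisted criterion \cite[Proposition~5.1]{go} rules out universal tightness except at the two extremal tuples $(-(n_1-2), \ldots, -(n_k-2))$ and $((n_1-2), \ldots, (n_k-2))$. Since Honda's classification permits at most two isotopy classes of universally tight Stein fillable contact structures, these two extremal fillings must realize both of them. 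Finally, to upgrade ``both isotopy classes'' to ``both $\cong \xi_{can}$'', I would repeat the orientation-reversal argument from the elliptic case: reversing the orientation of the pages of the Milnor open book $\OB_{\overline{n}}$ (Lemma~\ref{mil}) while simultaneously reversing the orientation of the fibre and base of each Seifert piece $V_i$ in the decomposition of $Y_{\overline{n}}$ yields a horizontal open book $\overline{\OB}_{\overline{n}}$ with the same binding, hence isomorphic to $\OB_{\overline{n}}$ by \cite[Theorem~4.6]{cnp}; the supported contact structure is $\overline{\xi}_{can}$, which is therefore isomorphic to $\xi_{can}$ but carries the opposite co-orientation, so it is not isotopic to it.

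The main obstacle I expect is bookkeeping in step three: verifying carefully, for the circular/self-plumbed Legendrian diagram of Figure~\ref{hypdiag} (in particular for the $k=1$ case where a single $2$-handle threads through the $1$-handle), both that every extremal-rotation-number choice simultaneously realizes the Thurston--Bennequin and framing conditions required for a Stein structure, and that Gompf's virtually overtwisted criterion indeed eliminates every non-extremal tuple (the criterion is originally phrased for a single $2$-handle, so one must check its multi-handle version applies termwise through the plumbing). A secondary subtlety is checking that the orientation reversal of the fibre and base in the proof of the final isomorphism statement is compatible with the non-Seifert-fibred structure present in the hyperbolic monodromy case, i.e., that it can be performed piecewise on each plumbing chart $V_i$ and then reassembled.
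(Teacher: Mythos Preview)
Your outline is sound and would yield a valid proof, but it takes a genuinely different route from the paper's. You identify $\xi_{can}$ among the $(n_1-1)\cdots(n_k-1)$ Stein fillable structures by \emph{elimination}: Gompf's virtually overtwisted criterion \cite[Proposition~5.1]{go} kills the non-extremal tuples, Honda's classification then forces the two extremal ones to be the two universally tight classes, and the open-book orientation-reversal trick shows both are isomorphic to $\xi_{can}$. The paper instead identifies $\xi_{can}$ \emph{directly}: it fixes an analytic structure, takes the minimal resolution $(W,J)$ as a holomorphic filling, computes $\langle c_1(J),[E_j]\rangle = E_j\cdot E_j+2$ via the adjunction formula for the holomorphic exceptional curves, deforms $J$ to a Stein $J'$ by Bogomolov--de Oliveira \cite{bd} (noting $W$ has no $(-1)$-spheres), and then matches $c_1(J')$ against the Chern classes of the constructed Stein fillings using $\langle c_1(J^i),[S^i_j]\rangle=\rot(U^i_j)$ and Lisca--Mati\'{c}. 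The equality $\rot=\tb+1$ singles out exactly the all-zigzags-on-one-side realization.

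What each approach buys: yours stays within the toolbox already deployed in the simple elliptic subsection and avoids invoking \cite{bd}; on the other hand, the two technical checks you flag (that \cite[Proposition~5.1]{go} applies termwise across the chain of $2$-handles, and that the page-reversal argument survives the passage from a global Seifert fibration to a graph-manifold decomposition $\bigcup V_i$) are real work, and the paper's adjunction/Bogomolov argument sidesteps both entirely. The paper's route also scales: it is the template used in \cite[Proposition~9.1]{bo} and would apply to more general plumbing graphs where no global circle-fibration is available for the orientation-reversal step.
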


\begin{figure}[ht]
  \relabelbox \small {%\epsfxsize=4.5in
  \centerline{\epsfbox{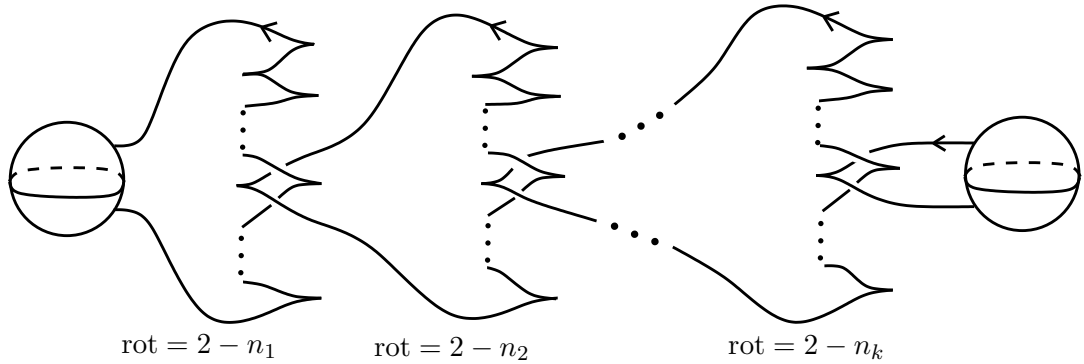}}}

\relabel{r1}{$\rot=2-n_1$}

\relabel{r2}{$\rot=2-n_2$}

\relabel{r3}{$\rot=2-n_k$}

  \endrelabelbox
        \caption{Legendrian handlebody diagram of a Stein filling of the contact $3$-manifold $(Y_{\overline{n}},\xi_{can})$.}
        \label{hypdiag}
\end{figure}

\begin{proof}
On the torus bundle $Y_{\overline{n}}$, there are $(n_1
-1)(n_2-1)\cdots(n_k-1)$ distinct isotopy classes of Stein fillable
contact structures two of which are universally tight according to
Honda's classification \cite{h2}. Each of these contact structures
can be realized as the boundary of some Stein surface as follows:
Consider the Dehn surgery description of the $3$-manifold
$Y_{\overline{n}}$ depicted in Figure~\ref{kirby}$(a)$ and $(b)$,
corresponding to the cases $k >1$ and $k=1$, respectively (cf.
\cite{neum}). By replacing the $0$-framed unknot linking the chain
with a dotted circle---which corresponds to surgering the
corresponding $2$-handle into a $1$-handle (see, for example,  page
168 in \cite{gs})---one obtains a handlebody diagram of a smooth
$4$-manifold whose boundary is diffeomorphic to $Y_{\overline{n}}$.
We opt to represent the unique $1$-handle in the diagram by a pair
of spheres aligned horizontally.

\begin{figure}[ht]
  \relabelbox \small {%\epsfxsize=4.5in
  \centerline{\epsfbox{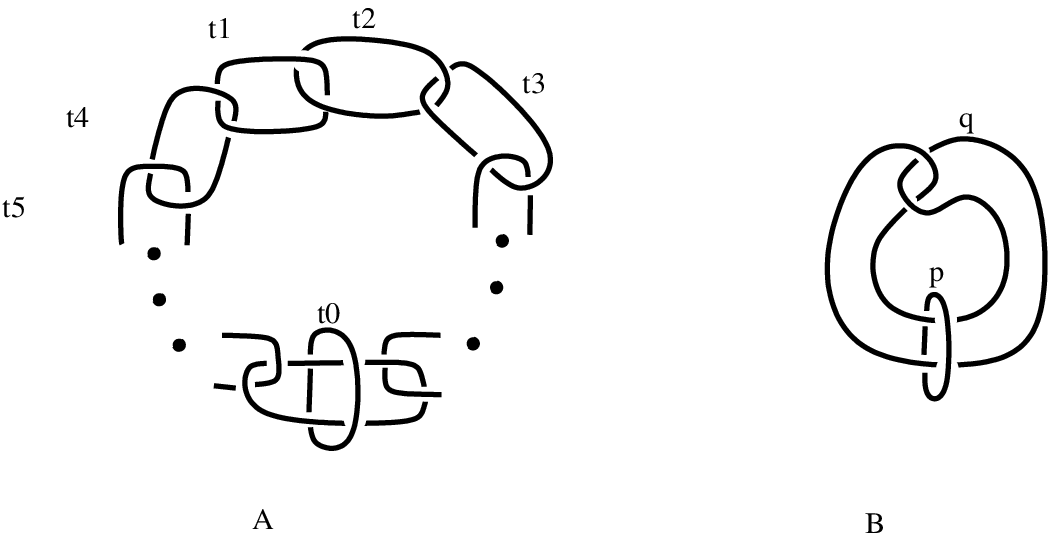}}}

\relabel{t1}{$-n_1$}

\relabel{t2}{$-n_2$}

\relabel{t3}{$-n_3$}

\relabel{t4}{$-n_{k}$}

\relabel{t5}{$-n_{k-1}$}

\relabel{t0}{$0$}

\relabel{p}{$0$}

\relabel{q}{$-n_1+2$}

\relabel{A}{$(a)\;  k > 1$}

\relabel{B}{$(b)\;  k = 1$}

  \endrelabelbox
        \caption{Surgery description of $Y_{\overline{n}}$}
        \label{kirby}
\end{figure}

Next we Legendrian realize all the unknots in this diagram including
the one that goes over the $1$-handle twice with zero linking. For
the case $k>1$, it is easy to see that the unknot with framing $-n_i
\leq -2$ can be realized as a Legendrian unknot whose rotation
number belongs to the set $$\{2-n_i, 4-n_i, \ldots, n_i-4, n_i-2\}$$
\noindent of $n_i-1$ elements, by putting zigzags to the right and
left alternatively. Similarly, for the case $k=1$, one can check
that the unknot with framing $-n_1+2$ which goes over the $1$-handle
twice with zero linking can be realized as a Legendrian unknot whose
rotation number belongs to the set $$\{2-n_1, 4-n_1, \ldots, n_1-4,
n_1-2\}$$ of $n_1-1$ elements, again by putting zigzags to the right
and left alternatively. As a consequence, the finite number of
isotopy classes of Stein fillable contact structures on
$Y_{\overline{n}}$ can be realized as boundaries of certain Stein
surfaces by considering all possible rotation numbers for each
Legendrian unknot in the above described surgery diagram.

We claim that the canonical contact structure must be the one where the
rotation numbers of all the Legendrian unknots are minimized as in
Figure~\ref{hypdiag} (or the one where the rotation numbers of all
the Legendrian unknots are maximized). This claim can be proved
using the argument we used in \cite[Proposition 9.1]{bo}; we provide
details below for the convenience of the reader.

Fix an analytic structure $(X,p)$ on the cone over $Y_{\overline n}$ and
note that the minimal resolution $\pi\colon \tilde X\to X$ provides a holomorphic
filling $(W,J)$ of $(Y_{\overline n},\xi_{can})$. In particular, $W$
is a regular neighborhood of the exceptional
divisor $E=\bigcup E_j$ of $\pi$, the dual graph of which is just the circular
plumbing graph $\Gamma_{\overline n}$. Since the curves $E_j$ are holomorphic, by the adjunction formula,
we have
\begin{equation} \label{holo}
\langle c_1(J),[E_j]\rangle = E_j \cdot E_j - 2\genus(E_j) + 2 = E_j \cdot E_j + 2.
\end{equation}

Now using a result of Bogomolov \cite{bd}, deform the complex structure $J$ so that
$(W,J^\prime)$ becomes a Stein surface, possibly after blowing down some $(-1)$-curves. Since $W$ contains
no topologically embedded spheres of self-intersection number $-1$, $(W,J^\prime)$ itself must be
Stein. Note that \eqref{holo} must continue to hold for $J^\prime$ even
though the curves $E_j$ are no longer holomorphic, since $J$ and $J^\prime$ are
homotopic to one another.

Now let $\{(W^i,J^i)\}$, for $i=1,\ldots,(n_1-1)(n_2-1)\cdots(n_k-1)$, denote
the finite set of Stein fillable contact structures on $Y_{\overline n}$
considered above by taking Legendrian realizations of
the diagram in Figure~\ref{kirby}.
Denote by $U^i_j$ a component of the corresponding Legendrian link
and let $S^i_j$ denote the associated surface in the Stein filling
$(W^i,J^i)$ obtained by pushing a Seifert surface for $U^i_j$ into
the $4$-ball union 1-handle and capping off by the core of the corresponding
$2$-handle (see \cite{go}). Notice that each $W^i$ is diffeomorphic
to $W$ by a diffeomorphism which carries $S^i_j$ to $E_j$ for each
$j$ (see \cite{gs}).

Now, using the well-known identities
$$ S^i_j \cdot S^i_j  = \tb(U^i_j)-1,\qquad \langle c_1(J^i), [S^i_j]\rangle = \rot(U^i_j) $$
(see \cite{go} for the second), observe that $\langle
c_1(J^i),[S^i_j]\rangle = S^i_j \cdot S^i_j + 2$ precisely when
$\rot(U^i_j) = \tb(U^i_j)+1$. Since the latter equality holds
exactly when all the cusps of $U^i_j$ except one are up cusps, it
follows that $\langle c_1(J),[E_j]\rangle = \langle
c_1(J^i),[S^i_j]\rangle$ for each $j$ precisely when all the extra
zigzags are chosen so that the additional cusps are all up cusps,
that is, when all the extra zigzags are chosen on the same fixed
side (which is determined by the orientation of the Legendrian
unknots). The proof is now completed by appealing to Lisca--Mati\`c \cite{lm} and noting that in the finite
list of Stein fillable contact structures on $Y_{\overline n}$ there is only one Stein
fillable contact structure up to isomorphism that comes from a Legendrian
realization as above where all the extra zigzags are on the same
fixed side.

\end{proof}

The $1$-handle in the handlebody diagram depicted in
Figure~\ref{hypdiag} can be replaced by a contact $(+1)$-surgery
along a Legendrian unknot as described in \cite{dg2} to obtain a
contact surgery diagram of $\xi_{can}$ on $Y_{\overline{n}}$  as
shown in Figure~\ref{hypwout}. Notice that the Euler class of
$\xi_{can}$ vanishes by Proposition~\ref{num} although its Stein
filling depicted in Figure~\ref{hypdiag} has non-vanishing first
Chern class.

\begin{figure}[ht]
  \relabelbox \small {%\epsfxsize=4.5in
  \centerline{\epsfbox{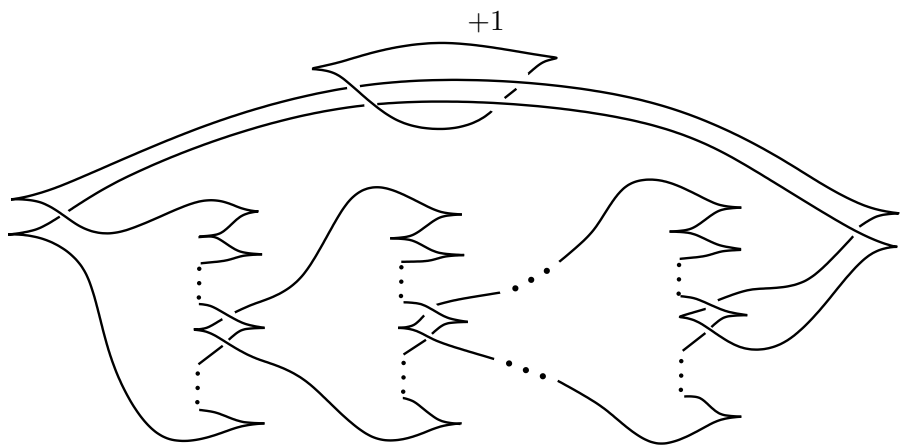}}}

%\relabel{r1}{$\rot=n_1-2$}

%\relabel{r2}{$\rot=n_2-2$}

%\relabel{r3}{$\rot=n_k-2$}

\relabel{a}{$+1$}

  \endrelabelbox
        \caption{A contact surgery diagram for the canonical contact structure $\xi_{can}$ on the link $Y_{\overline{n}}$ of a cusp singularity.}
        \label{hypwout}
\end{figure}

\v \noindent {\bf {Acknowledgement}}: We would like to thank the
referee for his corrections and suggestions that helped us
 improve the article considerably. We would also like to thank
 Patrick Popescu-Pampu for communicating a proof of Lemma~\ref{anti} to the referee.


\begin{thebibliography}{99999}

\bibitem{ao}
S. Akbulut and B. Ozbagci, {\em Lefschetz fibrations on compact
Stein surfaces}, Geom. Topol. 5 (2001), 319--334.

\bibitem{ay}
S. Akbulut and K. Yasui, {\em Infinitely many small exotic Stein
fillings}, preprint, arXiv:1208.1053

\bibitem{b} M. Bhupal,  \emph{Open book decompositions of links of simple
surface singularities}, Internat. J. Math., 20 (12) (2009), 1527--1545.

\bibitem{bo}  M. Bhupal and B. Ozbagci, {\em Milnor open books of
links of some rational surface singularities}, Pacific. J. Math. 254
(2011), 47--65.

\bibitem{bd}
F. A. Bogomolov and B.  de Oliveira, \emph{Stein small deformations
of strictly pseudoconvex surfaces}, Birational algebraic geometry
(Baltimore, MD, 1996),  25--41, Contemp. Math., 207, Amer. Math.
Soc., Providence, RI, 1997.

\bibitem{cnp}
C. Caubel, A. N\'{e}methi, and P. Popescu-Pampu, \emph{Milnor open
books and Milnor fillable contact $3$-manifolds},  Topology  45
(2006),  no. 3, 673--689.


\bibitem{dg1}
F. Ding and H. Geiges, \emph{A Legendrian surgery presentation of
contact 3-manifolds}, Proc. Cambridge. Philos. Soc. 136 (2004),
583--598.

\bibitem{dg2}
F. Ding and H. Geiges, \emph{Handle moves in contact surgery
diagrams}, J. Topol. 2 (2009), no. 1, 105--122.


\bibitem{dur} A. Durfee, \emph{The signature of smoothings of complex surface
singularities}, Math. Ann. 232 (1978), 85--98.

\bibitem{eo}
T. Etg\"{u} and B. Ozbagci, \emph{Explicit horizontal open books on
some plumbings},  Internat. J. Math., 17 (9) (2006), 1013--1031.


\bibitem{gay} David T. Gay,  \emph{Four-dimensional symplectic cobordisms
containing three-handles}, Geom. Topol. 10 (2006), 1749--1759
(electronic).


\bibitem{go} R. E. Gompf, \emph{Handlebody construction of Stein surfaces}, Ann.
of Math. (2), 148 (1998) no.~2, 619--693.


\bibitem{gs} R. E. Gompf, A. I. Stipsicz, \emph{$4$-manifolds and Kirby
calculus}, Graduate Studies in Mathematics, 20. American
Mathematical Society, Providence, RI, 1999.


\bibitem{h2} K. Honda, \emph{On the classification of tight contact structures.
II}, J. Differential Geom. 55 (2000), no. 1, 83--143.



\bibitem{ko} M. Korkmaz and B. Ozbagci, {\em On sections of elliptic fibrations},
Michigan Math. J. 56 (2008), 77--87.

\bibitem{la} H. Laufer, {\em On minimally elliptic singularities},  Amer. J. Math.
99 (1977), 1257--1295.


\bibitem{lo} Y. Lekili and B. Ozbagci, \emph{Milnor fillable contact structures are universally tight},
Math. Res. Lett. 17 (2010), no. 6, 1055--1063.

\bibitem{lm} P. Lisca and G. Mati\'{c}, \emph{Tight contact structures and Seiberg-Witten
invariants}, Invent. Math. 129 (1997), no. 3, 509--525.


\bibitem{nemet} A. N\'{e}methi, \emph{On the canonical contact structure of links of
complex surface singularities}, Proceedings of the Geometry of
Singularities and Manifolds, Kusatsu 2008, (2009), 99--118.


\bibitem{neum} W. D. Neumann,  \emph{A calculus for plumbing applied to the
topology of complex surface singularities and degenerating complex
curves}, Trans. Amer. Math. Soc. 268 (1981), no. 2, 299--344.

\bibitem{np}
W. D. Neumann and A. Pichon, \emph{Complex analytic realization of
links}, in Intelligence of Low Dimensional Topology, Eds. J. Scot
Carter \emph{et al.} World Scientific Publishing Co. (2006),
231--238.

\bibitem{oo} H. Ohta and K. Ono, \emph{Symplectic fillings of the
link of simple elliptic singularities}, J. Reine und Angew. Math.
565 (2003), 183--205.

\bibitem{pi} H. Pinkham, \emph{
Deformations of algebraic varieties with $G_m$ action},
Ast\'{e}risque, No. 20. Soci\'{e}t\'{e} Math\'{e}matique de France,
Paris, 1974.

\bibitem{ppp} P. Popescu-Pampu, {\em Numerically Gorenstein surface singularities
are homeomorphic to Gorenstein ones}, Duke Math. J. 159 (2011),
539--559.

\bibitem{se} J. Seade, \emph{A cobordism invariant for surface
singularities}, Proc. Symp. Pure Math. 40, Part 2 (1983), 479--484.

\bibitem{vhm} J. Van Horn Morris, \emph{Constructions of open book
decompositions}, Ph.D Dissertation, UT Austin, 2007.






\end{thebibliography}
\end{document}